\theoremstyle{plain}
\newtheorem{thm}{Theorem}[section]
\newtheorem{cor}[thm]{Corollary}
\newtheorem{lem}[thm]{Lemma} 
\newtheorem{prop}[thm]{Proposition}
\theoremstyle{definition}
\theoremstyle{remark}
\newtheorem{rem}[thm]{Remark}
\renewcommand{\deg}{\text{deg}\,}
\renewcommand{\phi}{\varphi}
\newtheorem{exa}[thm]{\textbf{Example}}
\theoremstyle{definition}
\newtheorem{defn}[thm]{\textbf{Definition}}
\theoremstyle{definition}
\theoremstyle{remark}
\begin{document}
\baselineskip=14pt

\title[]{Normalization of Puiseux hypersurfaces}

\author[]{Fuensanta Aroca}
\email{fuen@im.unam.mx}
\address{Instituto de Matem\'aticas, Universidad Nacional Aut\'onoma de M\'exico \\ Laboratorio Solomon Lefschetz, Unidad Mixta Internacional CRNS-UNAM-SECIHTI}

\author[]{Annel Ayala}
\email{annel\_ayala@im.unam.mx}
\address{Instituto de Matem\'aticas, Universidad Nacional Aut\'onoma de M\'exico }

\author[]{Oscar Castañon}
\email{oscar.moreno@im.unam.mx}
\address{Instituto de Matem\'aticas, Universidad Nacional Aut\'onoma de M\'exico}

\author[]{Diana Mendez Penagos}
\email{diana.mendez@im.unam.mx}
\address{Instituto de Matem\'aticas, Universidad Nacional Aut\'onoma de M\'exico }

\author[]{Damian Ochoa$^\dagger$}
\email{damian.ochoa@im.unam.mx (corresponding author)}
\address{Instituto de Matem\'aticas, Universidad Nacional Aut\'onoma de M\'exico}

\author[]{Camille Plénat }
\email{camille.plenat@univ-amu.fr}
\address{Aix Marseille Univ, CNRS, I2M. }

\keywords{Puiseux hypersurfaces, quasi-ordinary singularities, toric varieties, Hirzebruch-Jung singularities}
\thanks{{\it 2020 Mathematics Subject Classification}: 32S05, 14B05, 13F25, 14M25.  \\
\mbox{\hspace{11pt}}This work has been partially supported by Laboratorio Internacional Solomon Lefschetz,  by DGAPA- Universidad Nacional Autónoma de México through PAPIIT  IN113323,  and by  SECIHTI, Mexico  CVU's: 1337652, 781791, 1044169. We are grateful to M. Spivakovsky for his helpful comments and suggestions, which greatly improved this work. The corresponding author also thanks P. Popescu-Pampu and  P. González-Pérez for providing valuable references.}

\begin{abstract}
It is  known that the normalization of a quasi-ordinary complex singularity is a Hirzebruch-Jung, see \cite{perez2000singularites, popescu2004analytical, aroca2005normal}. 
We extend this result to Puiseux hypersurfaces. Moreover, we prove that Hirzebruch-Jung singularities are precisely  normalizations of Puiseux hypersurfaces.  Our result holds over an algebraically closed field whose characteristic does not divide the degree of the polynomial defining the hypersurface. Finally, in the analytic complex case, we conclude  that the normalization of an irreducible Puiseux hypersurface is the normalization of a complex analytic quasi-ordinary singularity.

\end{abstract}

\maketitle


$^\dagger$Corresponding Author. E-mail: damian.ochoa@im.unam.mx

\section*{Introduction}\label{sec1}

\noindent We denote by $\mathbb{K}$ an algebraically closed field. The ring of Puiseux series is defined as
\[
\mathbb{K}[[X^*]]=\mathbb{K}[[x_1^*, \ldots, x_n^*]]
:=\bigcup_{m\in\mathbb{Z}_{>0}}\mathbb{K}\left[\!\left[x_1^{1/m}, \ldots, x_n^{1/m}\right]\!\right].
\]

A monic polynomial $f\in\mathbb{K}[[X]][y]$ is called a \emph{Puiseux polynomial} if there exist $\xi_1,\ldots,\xi_d\in\mathbb{K}[[X^*]]$ such that 
\[
f=\prod_{i=1}^d(y-\xi_i).
\]
We consider Puiseux hypersurfaces $f$ such that $\operatorname{char}(\mathbb{K})$ does not divide $\deg(f)$.

An analytic germ of dimension $n$ is \emph{quasi-ordinary} when it is a local covering of $\mathbb{K}^n$ unramified outside the coordinate hyperplanes. 
In the complex analytic case, the Abhyankar–Jung theorem \cite{abhyankar1955ramification} states that quasi-ordinary hypersurfaces are Puiseux. 
Quasi-ordinary singularities have been widely studied; see, for example, \cite{alonso1989algorithm, MR2957197, mourtada2015polyhedral}. 
For an introductory text, we refer to \cite{da2011semigrupo}. 
Hironaka defined a more general class of singularities containing the quasi-ordinary ones, which he called \emph{$\nu$–quasi-ordinary} \cite{hironaka2003theory}; see also \cite{cassou2011nu, aroca2024nu}.

It is natural to ask which properties can be extended from quasi-ordinary to Puiseux hypersurfaces. 
For instance, Lipman and Gau \cite{lipman1988topological} associated to a root of a quasi-ordinary polynomial an ordered set of $n$-tuples of nonnegative rational numbers, called \emph{characteristic exponents}. 
Let $\lambda^{(1)},\ldots,\lambda^{(g)}$ be the set of characteristic exponents of a quasi-ordinary root $\xi$. Then
\begin{equation}
\label{eqexponentescharact1}
\mathbb{K}((X))(\xi)=\mathbb{K}((X))(X^{\lambda^{(1)}},\ldots,X^{\lambda^{(g)}}).
\end{equation}
In \cite{tornero2008kummer}, Tornero proved that if $\xi$ is a root of a Puiseux polynomial, there exist $\lambda^{(1)},\ldots,\lambda^{(g)}$ such that (\ref{eqexponentescharact1}) holds. 
Such a set is called a \emph{set of distinguished exponents} of $\xi$. 
In the quasi-ordinary case, it is known that the characteristic exponents determine the topological type of the singularity. 
The relationship between the topological type of a Puiseux hypersurface and its distinguished exponents is not yet known.

One of the most important properties of a quasi-ordinary singularity is that its normalization is \emph{Hirzebruch–Jung}, that is, analytically isomorphic to the zero-dimensional orbit of an affine toric variety defined by a maximal simplicial pair \cite{perez2000singularites, aroca2005normal, popescu2004analytical}. 
This paper is devoted to showing that this property also holds for Puiseux hypersurfaces 
(see Theorem~\ref{mainthm}).

The paper is organized into five sections. 
In Sections~\ref{seccionsemigrupos}–\ref{sectionpuiseux}, we review the basic notions of affine semigroups and their saturations, introduce the ring of series supported on a pointed affine semigroup, recall key results on toric varieties with precise references, and present Puiseux series together with a method to compute their distinguished exponents. 
Finally, in Section~\ref{Mainsection}, we prove (see Lemma~\ref{lemaaaaa}) that for $\xi\in\mathbb{K}[[X^*]]$ with distinguished exponents $\lambda^{(1)},\ldots,\lambda^{(g)}$, the normalization of $\mathbb{K}[[X]][\xi]$ coincides with the normalization of $\mathbb{K}[[X]][X^{\lambda^{(1)}},\ldots,X^{\lambda^{(g)}}]$, which allows us to establish:

\textbf{Theorem }[\ref{mainthm}]
The normalization of an irreducible Puiseux hypersurface is either nonsingular or has a Hirzebruch–Jung singularity. Conversely, every Hirzebruch–Jung singularity is analytically isomorphic to the normalization of a Puiseux hypersurface.

As a corollary, using a result of Popescu-Pampu \cite{popescu2003higher}, in the complex analytic case we conclude that the normalization of an irreducible Puiseux hypersurface coincides with the normalization of a complex analytic quasi-ordinary singularity (see Corollary~\ref{corolariofinal}).


\section{Affine Semigroups}\label{sec3}
\label{seccionsemigrupos}

A \emph{semigroup} is a nonempty set equipped with an associative and commutative binary operation. 
A \emph{monoid} is a semigroup with an identity element. 
A monoid is said to be \emph{pointed} when its only invertible element is the identity.

Let $S$ be a monoid. Given $E\subseteq S$, the group generated by $E$ is denoted by
\[
\mathbb{Z}E := \left\{ \sum_{e \in \Lambda\subseteq E} c_e e \ \Big| \ c_e \in \mathbb{Z}, \ \#\Lambda<\infty \right\},
\]
and the monoid generated by $E$ by
\[
\mathbb{N}E := \left\{ \sum_{e \in \Lambda\subseteq E} c_e e \ \Big| \ c_e \in \mathbb{Z}_{\geq 0}, \ \#\Lambda<\infty \right\}
\subseteq S\cup\{0\}.
\]

A monoid $S$ is said to be \emph{finitely generated} if there exists a finite set $E$ such that $S=\mathbb{N}E$.

A \emph{lattice} is a group isomorphic to $\mathbb{Z}^r$. 
An \emph{affine semigroup} is a finitely generated monoid that can be embedded in a lattice.

A subset $\sigma$ of the form
\[
\sigma=\langle E\rangle:=\left\{ \sum_{e \in \Lambda\subseteq E} c_e e \ \Big| \ c_e \in \mathbb{R}_{\geq0}, \ \#\Lambda<\infty \right\}
\]
is called a \emph{cone}. 
The set $E$ is said to be a \emph{system of generators} of $\sigma$. 
When a cone has a finite system of generators $E\subseteq \mathbb{Z}^n$, it is called a \emph{rational convex polyhedral cone}. 
A cone is \emph{strongly convex} when it does not contain a one-dimensional linear subspace.

\begin{lem}[Gordan]\label{gordan}
Let $\sigma\subseteq\mathbb{R}^n$ be a rational convex polyhedral cone.
Then $S=\sigma\cap \mathbb{Z}^n$ is a finitely generated semigroup.
\end{lem}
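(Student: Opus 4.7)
The plan is to exhibit an explicit finite subset of $S=\sigma\cap\mathbb{Z}^n$ that generates $S$ as a semigroup, using the classical box/parallelotope argument.

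First I would fix a finite system of generators $v_1,\ldots,v_s\in\mathbb{Z}^n$ of $\sigma$, which exists because $\sigma$ is rational convex polyhedral. Then I would introduce the bounded (compact) parallelotope
\[
K=\left\{\sum_{i=1}^s t_i v_i \;:\; 0\le t_i\le 1\right\}\subseteq\mathbb{R}^n,
\]
and set $F=(K\cap\mathbb{Z}^n)\cup\{v_1,\ldots,v_s\}$. Since $K$ is bounded and $\mathbb{Z}^n$ is discrete, $K\cap\mathbb{Z}^n$ is finite, hence $F$ is finite. By construction $F\subseteq S$ because $K\subseteq\sigma$ and each $v_i\in S$.

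Next I would show that $F$ generates $S$ as a monoid. Take any $v\in S$. Since $v\in\sigma$, there exist $\lambda_1,\ldots,\lambda_s\in\mathbb{R}_{\ge 0}$ with $v=\sum_i\lambda_i v_i$. Writing $\lambda_i=\lfloor\lambda_i\rfloor+\{\lambda_i\}$ with $\lfloor\lambda_i\rfloor\in\mathbb{Z}_{\ge 0}$ and $\{\lambda_i\}\in[0,1)$, I decompose
\[
v \;=\; \sum_{i=1}^s \lfloor\lambda_i\rfloor\, v_i \;+\; \sum_{i=1}^s \{\lambda_i\}\, v_i.
\]
The first summand is a nonnegative integer combination of $v_1,\ldots,v_s$. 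The second summand $w:=\sum_i\{\lambda_i\} v_i$ lies in $K$ by definition, and moreover $w=v-\sum_i\lfloor\lambda_i\rfloor v_i\in\mathbb{Z}^n$, so $w\in K\cap\mathbb{Z}^n\subseteq F$. Therefore $v$ is a nonnegative integer combination of elements of $F$, proving $S=\mathbb{N}F$.

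I do not expect any serious obstacle here; the only subtle point is verifying both that the fractional-part remainder $w$ is simultaneously in the compact set $K$ and in the lattice $\mathbb{Z}^n$ (which is what makes $K\cap\mathbb{Z}^n$ the right finite set), and that the $v_i$ themselves are included in $F$ so that the integer-part combination is also a combination of generators in $F$. These are immediate from the construction, so the proof is essentially a short application of finiteness of lattice points in a bounded region.
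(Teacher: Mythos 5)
Your proof is correct and is exactly the classical parallelotope argument for Gordan's Lemma; the paper itself gives no proof, only citations to \cite[Proposition~1.2.17]{cox2024toric} and \cite[page~7]{Kempf}, where this same decomposition into integer parts plus a lattice point of the bounded box $K$ is the proof given. The only cosmetic remark is that each $v_i$ already lies in $K\cap\mathbb{Z}^n$ (take $t_i=1$ and the other coefficients zero), so adjoining $\{v_1,\ldots,v_s\}$ to $F$ is harmless but redundant.
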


See \cite[Proposition~1.2.17]{cox2024toric} and \cite[page~7]{Kempf}.

\begin{defn}
A monoid $S$ is said to be \emph{saturated} if for all $k \in \mathbb{N}$ and $m \in \mathbb{Z}S$, the condition $km \in S$ implies that $m \in S$. 
The \emph{saturation} $\bar{S}$ of a monoid $S$ is the smallest saturated monoid containing $S$.
\end{defn}

\begin{exa}
The monoid $S$ generated by $\{2,3\}$ is not saturated. 
Indeed, taking $k=2$ and $m=1$, we have $km=2\in S$ while $m\notin S$. 
The saturation of $S$ is $\mathbb{Z}_{\geq0}$.
\end{exa}

\begin{lem}\label{lemaquenosfalta}
Let $S$ be a semigroup of $\mathbb{Z}^n$, let $M=\mathbb{Z}S$, and let $\sigma$ be the cone generated by $S$ in $\mathbb{R}^n$. 
Then
\[
\bar{S}=M\cap\sigma.
\]
\end{lem}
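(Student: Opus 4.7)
The plan is to prove the two inclusions $\bar{S}\subseteq M\cap\sigma$ and $M\cap\sigma\subseteq\bar{S}$ separately, and then conclude by antisymmetry.

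For $\bar{S}\subseteq M\cap\sigma$, I would verify that $M\cap\sigma$ is itself a saturated monoid containing $S$; since $\bar{S}$ is by definition the smallest such monoid, the inclusion follows at once. Containment of $S$ is immediate from $S\subseteq M$ and $S\subseteq\sigma$. Closure under addition and the presence of an identity are inherited from the group structure of $M$ and the cone structure of $\sigma$. For saturation, take $m\in\mathbb{Z}(M\cap\sigma)$ and $k\in\mathbb{N}$ with $km\in M\cap\sigma$; then $m\in M$ (as an integer combination of elements of $M$), and since $km\in\sigma$ and $\sigma$ is closed under multiplication by positive real scalars, dividing by $k$ yields $m\in\sigma$, hence $m\in M\cap\sigma$.

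For $M\cap\sigma\subseteq\bar{S}$, I would take an arbitrary $m\in M\cap\sigma$ and exhibit a positive integer $k$ such that $km\in S$; combined with $m\in M=\mathbb{Z}S\subseteq\mathbb{Z}\bar{S}$ and $km\in S\subseteq\bar{S}$, the saturation property of $\bar{S}$ will force $m\in\bar{S}$. Since $m\in\sigma=\langle S\rangle$, I can write $m=\sum_{i=1}^{r}c_i s_i$ with $s_i\in S$ and $c_i\in\mathbb{R}_{\geq 0}$. By Carathéodory's theorem one may further assume that the chosen $s_i$ are $\mathbb{R}$-linearly independent in $\mathbb{R}^n$; then the vector $(c_1,\dots,c_r)$ is the unique real solution of a linear system whose coefficient matrix and right-hand side both have entries in $\mathbb{Z}$, so in fact $c_i\in\mathbb{Q}_{\geq 0}$. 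Clearing denominators yields a common $k\in\mathbb{N}$ with $kc_i\in\mathbb{Z}_{\geq 0}$, and then $km=\sum(kc_i)s_i$ belongs to $S$ (since $S$ is closed under addition).

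The main obstacle is the rationality of the coefficients $c_i$, since $\sigma$ is defined a priori using real scalars and one must rule out genuinely irrational positive combinations. This is handled by the Carathéodory reduction to a linearly independent subfamily together with the observation that a nonsingular linear system with integer data has rational solution; every other step is a direct unwinding of the definitions.
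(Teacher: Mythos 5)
Your proof is correct. Note that the paper does not actually prove this lemma: it only cites \cite[Proposition~1.3.8]{cox2024toric} and \cite[Theorem~1', Chapter~1, Section~1]{Kempf}, so there is no in-text argument to compare against; your two-inclusion argument is essentially the standard one given in those references. The easy direction (checking that $M\cap\sigma$ is a saturated monoid containing $S$, hence contains $\bar S$ by minimality) and the hard direction (given $m\in M\cap\sigma$, produce $k\geq 1$ with $km\in S$ and invoke saturation of $\bar S$) are both handled correctly, and you rightly identify the only delicate point: ruling out irrational coefficients in the conic combination. The Carath\'eodory reduction to a linearly independent subfamily of $S$, followed by Cramer's rule on an integer system to get $c_i\in\mathbb{Q}_{\geq 0}$ and clearing denominators, settles this; the degenerate case $m=0$ is covered since $\bar S$ is a monoid. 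One small bonus of your route: because you never invoke Gordan's lemma or finite generation, the argument works verbatim for an arbitrary subsemigroup of $\mathbb{Z}^n$, which matches the generality in which the lemma is stated (the cited references formulate the result for affine, i.e.\ finitely generated, semigroups). The only pedantic caveat is that your division by $k$ in the saturation check implicitly reads the paper's ``$k\in\mathbb{N}$'' as $k\geq 1$, which is clearly the intended meaning.
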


See \cite[Proposition~1.3.8]{cox2024toric} and \cite[Theorem~1', Chapter~1, Section~1]{Kempf}.

\vspace{0.25in}

We conclude this section by stating some equivalences between semigroups and cones.

\begin{rem}\label{Remark1}
Given a monoid $S\subseteq M$, with $M\simeq \mathbb{Z}^n$, we have:
\begin{enumerate}
    \item $S$ is affine if and only if $\langle S\rangle$ is rational.
    \item $S$ is pointed and finitely generated if and only if $\langle S\rangle$ is rational and strongly convex.
    \item $S$ is saturated if and only if $S=\langle S\rangle\cap \mathbb{Z}S.$
\end{enumerate}
\end{rem}


\section{Normal Toric Varieties}\label{sec4}\label{sectnormalsatu}

An \emph{affine toric variety} is an irreducible affine variety $V$ containing a torus $T \simeq (\mathbb{K}^*)^n$ as a Zariski open subset, such that the algebraic action of $T$ on itself extends to an action of $T$ on $V$.

Given an affine semigroup $S$, we denote by $\mathbb{K}[S]$ the semigroup algebra
\[
\mathbb{K}[S] := \Big\{ \sum_{\alpha \in \Lambda} a_\alpha \mathbf{x}^\alpha \ \Big| \ a_\alpha \in \mathbb{K}, \ \Lambda \subseteq S, \ \#\Lambda < \infty \Big\}.
\]

\begin{prop}\label{specdesemigrupo}
$S$ is an affine semigroup if and only if $\mathbb{K}[S]$ is the coordinate ring over $\mathbb{K}$ of an affine toric variety.
\end{prop}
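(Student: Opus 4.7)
For the forward implication, assume $S\subseteq \mathbb{Z}^n$ is an affine semigroup with finite generating set $e_1,\ldots,e_s$. The $\mathbb{K}$-algebra homomorphism $\mathbb{K}[y_1,\ldots,y_s]\to \mathbb{K}[\mathbf{x}_1^{\pm 1},\ldots,\mathbf{x}_n^{\pm 1}]$ defined by $y_i\mapsto \mathbf{x}^{e_i}$ has image exactly $\mathbb{K}[S]$, so $\mathbb{K}[S]$ is a finitely generated $\mathbb{K}$-subalgebra of a Laurent polynomial ring, hence a finitely generated $\mathbb{K}$-algebra that is also a domain. Thus $V:=\operatorname{Spec}(\mathbb{K}[S])$ is an irreducible affine variety. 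The inclusion $\mathbb{K}[S]\hookrightarrow \mathbb{K}[\mathbb{Z}S]$ is a localization corresponding to an open immersion of the torus $T:=\operatorname{Spec}(\mathbb{K}[\mathbb{Z}S])\simeq(\mathbb{K}^*)^r$ (with $r=\operatorname{rank}\mathbb{Z}S$) into $V$. Finally, the comultiplication $\mathbb{K}[S]\to \mathbb{K}[\mathbb{Z}S]\otimes_{\mathbb{K}} \mathbb{K}[S]$, $\mathbf{x}^m\mapsto \mathbf{x}^m\otimes \mathbf{x}^m$ (well defined because $S\subseteq \mathbb{Z}S$), extends the multiplication of $T$ to an algebraic action of $T$ on $V$. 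This verifies the three clauses of the definition.

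For the converse, suppose $\mathbb{K}[V]=\mathbb{K}[S]$ for an affine toric variety $V$ with torus $T\simeq(\mathbb{K}^*)^n$, and let $M=\operatorname{Hom}(T,\mathbb{K}^*)\simeq \mathbb{Z}^n$ be its character lattice. Since $V$ is an affine variety, $\mathbb{K}[V]$ is a finitely generated $\mathbb{K}$-algebra and a domain. The $T$-action on $V$ induces an $M$-grading
\[
\mathbb{K}[V]=\bigoplus_{m\in M}\mathbb{K}[V]_m,
\]
where $\mathbb{K}[V]_m$ is the isotypic component on which $T$ acts through the character $m$. Comparing this with the tautological monomial decomposition $\mathbb{K}[S]=\bigoplus_{\alpha\in S}\mathbb{K}\cdot \mathbf{x}^\alpha$ identifies $S$ with the support $\{m\in M:\mathbb{K}[V]_m\neq 0\}$, which is a submonoid of the lattice $M$. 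Finite generation of $S$ is then obtained by writing each of finitely many $\mathbb{K}$-algebra generators of $\mathbb{K}[V]$ as a sum of its $M$-homogeneous components and collecting the resulting elements of $M$.

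The delicate step is the converse direction: recovering the combinatorial object $S$ from the abstract geometric datum of an affine toric variety requires producing the $M$-grading on $\mathbb{K}[V]$ from the torus action, which ultimately rests on the fact that every rational representation of an algebraic torus decomposes as a direct sum of characters. Once this grading is available, identifying $S$ with its support and deducing all required properties---cancellativity, torsion-freeness, lattice embedding and finite generation---follows formally from the domain and finite-generation properties of $\mathbb{K}[V]$.
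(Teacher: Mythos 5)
The paper does not actually prove Proposition~\ref{specdesemigrupo}; it only cites \cite[Propositions 1.1.14 and 1.3.5]{cox2024toric} and Kempf, so your sketch is being compared against the standard textbook argument. Your forward direction is exactly that argument and is correct: $\mathbb{K}[S]$ is a finitely generated domain, $\mathbb{K}[\mathbb{Z}S]=\mathbb{K}[S]_{\mathbf{x}^{e_1+\cdots+e_s}}$ gives the torus as a principal open subset, and the comultiplication extends the torus action.

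The converse, however, has a genuine gap at the step ``comparing this with the tautological monomial decomposition \ldots identifies $S$ with the support $\{m\in M: \mathbb{K}[V]_m\neq 0\}$.'' The hypothesis only gives an abstract $\mathbb{K}$-algebra isomorphism $\mathbb{K}[V]\simeq\mathbb{K}[S]$; nothing forces it to carry the isotypic decomposition coming from the $T$-action onto the monomial decomposition of $\mathbb{K}[S]$ (two decompositions of the same algebra into one-dimensional subspaces need not coincide, and the monomials $\mathbf{x}^\alpha$ need not be $T$-eigenvectors under the transported action). What the torus argument actually yields is that $\mathbb{K}[V]$ equals $\mathbb{K}[S']$ for \emph{some} affine semigroup $S'\subseteq M$, which is not yet the statement about the given $S$. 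The good news is that the conclusion you need does not require the grading at all, and you essentially say so in your last paragraph: since $\mathbb{K}[S]$ is a finitely generated $\mathbb{K}$-algebra and the monomials $\{\mathbf{x}^\alpha\}_{\alpha\in S}$ are linearly independent, writing each algebra generator as a finite sum of monomials and expanding products shows $S=\mathbb{N}E$ for a finite set $E$ of exponents; and since $\mathbb{K}[S]$ is a domain, $S$ is cancellative and its Grothendieck group $\mathbb{Z}S$ is torsion-free (from $\mathbf{x}^{c}(\mathbf{x}^a-\mathbf{x}^b)=0$ and $(\mathbf{x}^a)^k=(\mathbf{x}^b)^k$ respectively, using linear independence of monomials), so $S$ embeds in the lattice $\mathbb{Z}S$. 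I recommend you restructure the converse around this direct argument and drop the unjustified identification with the character lattice of the given torus.
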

See \cite[Propositions 1.1.14 and 1.3.5]{cox2024toric}, \cite[Theorem 1, Chapter 1, Section 1]{Kempf}. 

By Proposition~\ref{specdesemigrupo} and Lemma~\ref{gordan}, given a rational convex polyhedral cone $\sigma \subseteq \mathbb{R}^n$ and a lattice $M \simeq \mathbb{Z}^n$, one can construct an affine toric variety
\[
T_\sigma = \mathrm{Spec}\, \mathbb{K}[\sigma \cap M].
\]
In the following, we consider $\sigma$ to be a strongly convex rational cone of maximal dimension in $\mathbb{R}^n$.

\begin{defn}
An affine variety $\mathbb{X}$ is said to be \emph{normal} if its coordinate ring $\mathcal{A}(\mathbb{X})$ is integrally closed in its field of fractions. 
The \emph{normalization} $\mathcal{N}(I)$ of a domain $I$ is the integral closure of $I$ in its field of fractions.
\end{defn}
\begin{exa}
The toric variety $\mathbb{X}=V(x^3-y^2)$ is not normal: The monic polynomial
    $$p(z)=z^2-\Bar{x}\in \mathcal{A}(\mathbb{X})[z].$$
has as root $\alpha=\frac{\Bar{y}}{\Bar{x}}$  that is in the field of fractions of $\mathcal{A}(\mathbb{X})$, but it does not lie in $\mathcal{A}(\mathbb{X})$.
\end{exa}

\begin{lem}
\label{lemasaturado2}
Let $\bar{S}$ be  the saturation of the affine semigroup $S$. The normalization of  $\mathbb{K}[S]$ is $\mathbb{K}[\bar{S}]$. 
\end{lem}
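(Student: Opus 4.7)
The plan is to establish the two inclusions $\mathbb{K}[\bar{S}] \subseteq \mathcal{N}(\mathbb{K}[S])$ and $\mathcal{N}(\mathbb{K}[S]) \subseteq \mathbb{K}[\bar{S}]$. A preliminary observation is that both rings share the same field of fractions $\mathbb{K}(M)$ with $M := \mathbb{Z}S = \mathbb{Z}\bar{S}$: any Laurent monomial $x^m$ with $m \in M$ can be written as $x^a/x^b$ with $a, b \in S$, since by the very definition of the group generated by $S$ one has $m = a - b$ for some $a, b \in S$.

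For the inclusion $\mathbb{K}[\bar{S}] \subseteq \mathcal{N}(\mathbb{K}[S])$, the key observation is that every $m \in \bar{S}$ admits a positive integer $k$ with $km \in S$. Indeed, Lemma~\ref{lemaquenosfalta} gives $\bar{S} = M \cap \sigma$ with $\sigma = \langle S \rangle$, so $m$ is a nonnegative real combination of finitely many generators $e_1, \ldots, e_s$ of $S$. Since $m$ and these generators lie in $\mathbb{Z}^n$, the linear system $m = \sum_i r_i e_i$ with $r_i \geq 0$ admits a nonnegative rational solution, and clearing denominators produces a positive integer $k$ with $km \in S$. Hence $x^m$ is a root of the monic polynomial $z^k - x^{km} \in \mathbb{K}[S][z]$, so $x^m \in \mathcal{N}(\mathbb{K}[S])$; since the integral closure is a $\mathbb{K}$-algebra, this yields $\mathbb{K}[\bar{S}] \subseteq \mathcal{N}(\mathbb{K}[S])$.

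For the reverse inclusion it suffices to show that $\mathbb{K}[\bar{S}]$ is integrally closed in $\mathbb{K}(M)$. The standard approach is to realize $\sigma$ as a finite intersection of closed half-spaces $H_u^+ = \{v \in \mathbb{R}^n : \langle u, v\rangle \geq 0\}$ indexed by a family of facet normals $u$ in the dual lattice, which combined with Lemma~\ref{lemaquenosfalta} yields
\[
\mathbb{K}[\bar{S}] \;=\; \bigcap_u \mathbb{K}\bigl[M \cap H_u^+\bigr].
\]
Each factor on the right is, after a suitable change of lattice basis, isomorphic to a polynomial ring in one variable over a Laurent polynomial ring, and therefore is a regular domain, in particular integrally closed. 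Since an intersection of integrally closed subrings sharing a common field of fractions is integrally closed, $\mathbb{K}[\bar{S}]$ is normal and the lemma follows. The main obstacle is the half-space intersection formula together with the regularity of each factor; these are standard facts about rational polyhedral cones and their duals, and in practice I would cite \cite[\S 1.2--1.3]{cox2024toric} and \cite[Chapter~1]{Kempf} rather than reproving them.
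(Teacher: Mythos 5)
Your proposal is correct, and it is essentially the argument the paper relies on: the paper offers no proof of its own but simply cites \cite[Propositions 1.3.5 and 1.3.8]{cox2024toric} and \cite[Theorem 1, Chapter 1, Section 1]{Kempf}, and your two inclusions (integrality of $x^m$ via $z^k - x^{km}$ for the easy direction, and normality of $\mathbb{K}[\bar S]$ via the half-space decomposition $\mathbb{K}[\bar{S}] = \bigcap_u \mathbb{K}[M \cap H_u^+]$ for the hard one) are precisely the proofs given in those references. The only points worth tightening if you wrote this out in full are the justification that a rational linear system with a nonnegative real solution has a nonnegative rational one (e.g.\ via Carath\'eodory, reducing to linearly independent generators) and the handling of cones that are not full-dimensional in the half-space decomposition; both are routine.
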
 
See ~\cite[Propositions 1.3.5 and 1.3.8]{cox2024toric}, ~\cite[Theorem 1, Chapter 1, Section 1]{Kempf}. 

The affine toric variety defined by a pair $(M,\sigma)$, where  $M$ is a lattice in $\mathbb{Q}^n$ and $\sigma\subset \mathbb{R}^n$ is normal.




\begin{defn}
\label{maximalespecial}
Let $S$ be a pointed affine semigroup. The maximal ideal of $\mathbb{K}[S]$ generated by $\{x^{\alpha}\ | \ \alpha\in S\setminus\{0\}\}$ is said to be the special ideal. The point of the toric variety that corresponds to the special maximal ideal is also called special.
\end{defn}


\section{Analytical Equivalence}\label{sec4}\label{secserieswexpsemig}

\begin{defn}
Let $\mathbf{m}$ be a maximal ideal of a ring $\mathcal{R}$. 
The \emph{completion} of $\mathcal{R}$ with respect to $\mathbf{m}$ is the inverse limit
\[
\hat{\mathcal{R}} := \varprojlim_{k} \frac{\mathcal{R}}{\mathbf{m}^k}.
\]
\end{defn}

Let $S$ be a pointed monoid. We denote by $\mathbb{K}[[S]]$ the ring
\[
\mathbb{K}[[S]] := \Big\{ \sum_{\alpha \in \Lambda} a_\alpha X^\alpha \ \Big| \ a_\alpha \in \mathbb{K}, \ \Lambda \subseteq S \Big\}.
\]
Note that if $S$ is not pointed, multiplication in $\mathbb{K}[[S]]$ is not defined.

\begin{rem}\label{lemasaturado}
The completion of $\mathbb{K}[S]$ with respect to its maximal ideal $\mathfrak{m} = \langle X^\alpha \mid \alpha \in S, \alpha \neq 0\rangle$ is $\mathbb{K}[[S]]$, see \cite[Example 10.14, p.~181]{eisenbud2013commutative}. 
Since excellent rings satisfy that normalization and completion commute, see \cite[Proposition 7.8.3.(vii)]{egaiv1964grothendieck}, it follows from Lemma \ref{lemasaturado2} that
\[
\mathcal{N}(\mathbb{K}[[S]]) = \mathbb{K}[[\bar{S}]].
\]
\end{rem}

\begin{lem}\label{lema}
Let $\gamma^{(1)}, \ldots, \gamma^{(g)} \in (\mathbb{Q}_{\ge 0})^n$ and let $S$ be the affine semigroup generated by $(\mathbb{Z}_{\ge 0})^n$ together with $\gamma^{(1)}, \ldots, \gamma^{(g)}$. Then
\[
\mathbb{K}[[X]][X^{\gamma^{(1)}},\ldots,X^{\gamma^{(g)}}] = \mathbb{K}[[S]].
\]
\end{lem}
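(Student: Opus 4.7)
The plan is to prove the two inclusions separately. The inclusion $\mathbb{K}[[X]][X^{\gamma^{(1)}},\ldots,X^{\gamma^{(g)}}] \subseteq \mathbb{K}[[S]]$ is the easy direction: an arbitrary element on the left is a finite $\mathbb{K}[[X]]$-linear combination of monomials of the form $(X^{\gamma^{(1)}})^{k_1}\cdots(X^{\gamma^{(g)}})^{k_g}=X^{k_1\gamma^{(1)}+\cdots+k_g\gamma^{(g)}}$ with $k_i\in\mathbb{Z}_{\ge 0}$. Each coefficient from $\mathbb{K}[[X]]$ has support in $\mathbb{Z}_{\ge 0}^n$, so each summand has support in $\sum_i k_i\gamma^{(i)}+\mathbb{Z}_{\ge 0}^n\subseteq S$, and the whole sum has support in $S$.

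For the reverse inclusion, the plan is to bucket the monomials appearing in an element $f\in\mathbb{K}[[S]]$ according to a finite set of representatives of the ``$\gamma$-part''. Choose an integer $N>0$ with $N\gamma^{(i)}\in\mathbb{Z}_{\ge 0}^n$ for every $i$. For each $\alpha\in S$, the definition of $S$ provides $\alpha=\beta_0+\sum_i k_i\gamma^{(i)}$ with $\beta_0\in\mathbb{Z}_{\ge 0}^n$ and $k_i\in\mathbb{Z}_{\ge 0}$. Writing $k_i=q_iN+r_i$ with $0\le r_i<N$ and absorbing each $q_iN\gamma^{(i)}\in\mathbb{Z}_{\ge 0}^n$ into $\beta_0$, one obtains a decomposition
$$\alpha=\beta(\alpha)+\sum_{i=1}^g r_i(\alpha)\gamma^{(i)},\qquad \beta(\alpha)\in\mathbb{Z}_{\ge 0}^n,\ \mathbf{r}(\alpha)=(r_1(\alpha),\ldots,r_g(\alpha))\in\{0,\ldots,N-1\}^g.$$
Fix one such choice for every $\alpha\in S$. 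Given $f=\sum_{\alpha\in S}a_\alpha X^\alpha\in\mathbb{K}[[S]]$, regroup:
$$f=\sum_{\mathbf{r}\in\{0,\ldots,N-1\}^g} X^{\sum_i r_i\gamma^{(i)}}\cdot c_{\mathbf{r}}(X),\qquad c_{\mathbf{r}}(X):=\sum_{\alpha:\,\mathbf{r}(\alpha)=\mathbf{r}} a_\alpha X^{\beta(\alpha)}.$$
The map $\alpha\mapsto\beta(\alpha)$ is injective on each bucket (since $\beta(\alpha)=\alpha-\sum_i r_i\gamma^{(i)}$ is determined by $\alpha$ once $\mathbf{r}$ is fixed), and $\beta(\alpha)\in\mathbb{Z}_{\ge 0}^n$, so each $c_{\mathbf{r}}(X)$ is a well-defined element of $\mathbb{K}[[X]]$. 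The outer sum is finite, with $N^g$ terms, giving $f\in\mathbb{K}[[X]][X^{\gamma^{(1)}},\ldots,X^{\gamma^{(g)}}]$.

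The only delicate point, which I expect to be the main obstacle, is verifying that the regrouping is consistent: every $\alpha\in S$ must land in exactly one bucket, and the coefficients in each bucket must still define a formal series in $\mathbb{K}[[X]]$. Both facts follow from the choice of a \emph{single} decomposition per $\alpha$ and from the constraint $\beta(\alpha)\in\mathbb{Z}_{\ge 0}^n$, but it is worth stating them explicitly. As a byproduct, the argument exhibits the monomials $X^{\sum_i r_i\gamma^{(i)}}$ with $\mathbf{r}\in\{0,\ldots,N-1\}^g$ as a finite system of generators of $\mathbb{K}[[S]]$ as a $\mathbb{K}[[X]]$-module, in line with the fact that each $X^{\gamma^{(i)}}$ is integral over $\mathbb{K}[[X]]$.
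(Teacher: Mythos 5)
Your proof is correct and rests on the same mechanism as the paper's: Euclidean division of the multiplicities of the fractional generators by a common denominator, so that $f$ becomes a finite sum of monomials $X^{\sum_i r_i\gamma^{(i)}}$ times genuine elements of $\mathbb{K}[[X]]$. The only difference is packaging: the paper treats $g=1$ explicitly and then asserts the general case by induction on $g$, whereas you reduce all $g$ multiplicities simultaneously with a multi-index $\mathbf{r}\in\{0,\ldots,N-1\}^g$, which avoids the induction entirely and, by fixing one decomposition per $\alpha\in S$ and checking injectivity of $\alpha\mapsto\beta(\alpha)$ on each bucket, is slightly more careful about the well-definedness of the regrouped series than the paper's sum over non-unique pairs $(\alpha,m)$.
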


\begin{proof}
We proceed by induction on $g$. For $g=1$, let $S$ be generated by $(\mathbb{Z}_{\ge 0})^n$ and $\gamma = \beta/k$ with $\beta \in (\mathbb{Z}_{\ge 0})^n$ and $k>0$. 

Clearly, 
\[
\mathbb{K}[[X]][X^\gamma] \subseteq \mathbb{K}[[S]].
\]

Conversely, let $f \in \mathbb{K}[[S]]$. Then $f$ can be written as
\[
f = \sum_{\substack{\alpha \in (\mathbb{Z}_{\ge 0})^n \\ m \in \mathbb{Z}_{\ge 0}}} a_{\alpha,m} X^\alpha (X^\gamma)^m.
\]

Write $m = j + k \ell$ with $0 \le j < k$ and $\ell \in \mathbb{Z}_{\ge 0}$. Then
\[
f = \sum_{j=0}^{k-1} X^{j\gamma} \sum_{\ell \ge 0} \sum_\alpha a_{\alpha,j+k\ell} X^\alpha (X^\gamma)^{k\ell}.
\]

Since $(X^\gamma)^{k\ell} = X^{\beta \ell} \in \mathbb{K}[[X]]$, it follows that each inner sum lies in $\mathbb{K}[[X]]$, so $f \in \mathbb{K}[[X]][X^\gamma]$.  

The general case follows by induction on $g$.
\end{proof}

\begin{exa}
We have 
\[
\mathbb{K}[[x]][x^{1/2}] = \mathbb{K}[[x^{1/2}]].
\]
Indeed, any $y \in \mathbb{K}[[x^{1/2}]]$ can be written as
\[
y = \sum_{i=0}^{\infty} a_i (x^{1/2})^i = \sum_{i \text{ even}} a_i x^{i/2} + \sum_{i \text{ odd}} a_i x^{i/2} = \sum_{i \text{ even}} a_i x^{i/2} + x^{1/2} \sum_{i \text{ odd}} a_i x^{(i-1)/2},
\]
which lies in $\mathbb{K}[[x]][x^{1/2}]$.
\end{exa}

Two singularities $p \in \mathbb{X}$ and $q \in \mathbb{Y}$ are said to be \emph{analytically isomorphic} if there exists an isomorphism of $\mathbb{K}$-algebras
\[
\hat{\mathcal{O}}_p \simeq \hat{\mathcal{O}}_q.
\]

For example (Hartshorne, Example 5.6.3 \cite{hartshorne2013algebraic}), let $\mathbb{X} = V(x^2(x+1)-y^2)$ and $\mathbb{Y} = V(xy)$. Then the singularity $(0,0)$ in $\mathbb{X}$ is analytically isomorphic to $(0,0)$ in $\mathbb{Y}$. Geometrically, near the origin both are the union of two nonsingular branches with distinct tangents.



\section{ Puiseux Hypersurfaces}
\label{sectionpuiseux}\label{sec6}

\begin{defn}
  A monic polynomial $f\in\mathbb{K}[[X]][y]$ is called Puiseux polynomial  when there exist elements $\xi_1,\ldots,\xi_d$  of the ring of Puiseux series $$\mathbb{K}[[X^*]]:=\bigcup_{m\in\mathbb{Z}_{>0} }\mathbb{K}\left[\left[x_1^{1/m}, \ldots, x_n^{1/m}\right]\right],$$ such that $f=\Pi_{i=1}^d(y-\xi_i)$.
\end{defn}

\begin{lem}
\label{isomK[[x]]/fyK[[x]][xi]}
    Let $f\in\mathbb{K}[[X]][y]$ be an irreducible  Puiseux polynomial and let $\xi$ be a root of $f$. There exists an isomorphism between $\mathbb{K}[[X]][y]/\langle f\rangle$ and $\mathbb{K}[[X]][\xi]$.
\end{lem}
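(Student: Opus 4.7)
The plan is to exhibit the isomorphism as the evaluation-at-$\xi$ map and to identify its kernel with $\langle f\rangle$. Concretely, consider the $\mathbb{K}[[X]]$-algebra homomorphism
\[
\phi:\mathbb{K}[[X]][y]\longrightarrow \mathbb{K}[[X^*]], \qquad y\longmapsto \xi.
\]
By the very definition of $\mathbb{K}[[X]][\xi]$ the image of $\phi$ equals $\mathbb{K}[[X]][\xi]$, so $\phi$ is surjective onto the target algebra. Since $f(\xi)=0$ we have the inclusion $\langle f\rangle\subseteq \ker\phi$, and the content of the lemma is the reverse inclusion.

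For the reverse inclusion I would use that $f$ is monic, which allows a Euclidean-style division in $\mathbb{K}[[X]][y]$: any $g\in\mathbb{K}[[X]][y]$ can be written as
\[
g=qf+r, \qquad q,r\in \mathbb{K}[[X]][y], \qquad \deg_y r<\deg_y f=d.
\]
If $g\in\ker\phi$, applying $\phi$ forces $r(\xi)=0$ in $\mathbb{K}[[X^*]]$. It then suffices to conclude that $r=0$, i.e.\ that no nonzero polynomial of $y$-degree less than $d$ vanishes at $\xi$.

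The key ingredient for this last step is that $f$ is the minimal polynomial of $\xi$ over the fraction field $\mathbb{K}((X))$. Since $\mathbb{K}[[X]]=\mathbb{K}[[x_1,\ldots,x_n]]$ is a UFD, Gauss's lemma applies: the monic polynomial $f$, being irreducible in $\mathbb{K}[[X]][y]$, is also irreducible in $\mathbb{K}((X))[y]$, and being monic it is primitive, so it is the minimal polynomial of its root $\xi$ over $\mathbb{K}((X))$. Consequently $\xi$ has degree exactly $d$ over $\mathbb{K}((X))$, so a polynomial $r$ of $y$-degree strictly less than $d$ with $r(\xi)=0$ must vanish identically. This gives $g=qf\in\langle f\rangle$, hence $\ker\phi=\langle f\rangle$, and the first isomorphism theorem yields $\mathbb{K}[[X]][y]/\langle f\rangle\simeq \mathbb{K}[[X]][\xi]$.

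The step I expect to be delicate, and which I would want to state clearly, is the invocation of Gauss's lemma to pass from irreducibility of $f$ in $\mathbb{K}[[X]][y]$ to irreducibility in $\mathbb{K}((X))[y]$; everything else is formal once one has the Euclidean division by a monic polynomial and the minimal-polynomial interpretation of $f$.
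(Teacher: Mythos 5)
Your proposal is correct and follows essentially the same route as the paper: the evaluation map at $\xi$, Euclidean division by the monic polynomial $f$, and the minimal-polynomial argument to kill the remainder. The only difference is that you explicitly justify, via Gauss's lemma over the UFD $\mathbb{K}[[X]]$, why irreducibility of $f$ in $\mathbb{K}[[X]][y]$ makes it the minimal polynomial of $\xi$ over $\mathbb{K}((X))$ --- a step the paper's proof uses without comment.
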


\begin{proof}
    Let us consider the morphism
\begin{equation*} 
\begin{split}
\psi: \ \mathbb{K}[[ X]][y] &\to\mathbb{K} \ [[X]][\xi(x)] \\
 h(y)&\mapsto h(\xi).
\end{split}
\end{equation*}

We have  $\langle f\rangle= \text{Ker}(\psi)$. Indeed, it is clear that $\langle f\rangle\subseteq \text{Ker}(\psi)$, now take $h\in\text{Ker}(\psi)$, the division algorithm implies that there exist $q,r\in \mathbb{K}[[X]][y]$ with $\text{deg}_y(r)<\text{deg}_y(f)$ such that $h=qf+r$. Since $\xi$ is a common root of $f$ and $h$, it is also a root of $r$. Since $f$ is the minimal polynomial of $\xi$, we obtain that $r$ is identically zero, and thus $\text{Ker}(\psi)\subseteq \langle f\rangle$. Together with the fact that $\psi$ is surjective, we obtain
$$\mathbb{K}[[X]][y]/ \langle f\rangle \simeq\mathbb{K}[[X]][\xi].$$

\end{proof}

 \begin{defn}
  \label{defDisting}  
Let $\xi \in \mathbb{K}[[X^*]]$. $E=\left\{ \lambda^{(1)},\ldots, \lambda^{(g)} \right\} \subset 
\text{Supp} \left( \xi \right)$
is a set of distinguished\footnote{Given a Puiseux series $\xi\in\mathbb{K}[[X^*]]$, in \cite{tornero2008kummer} the elements of a set  $\left\{k\lambda^{(1)},\ldots,  k\lambda^{(g)} \right\} \subset k\ \text{Supp} \left( \xi \right)\subset (\mathbb{Z}_{\geq0})^n$ such that $\lambda^{(1)}, \ldots, \lambda^{(g)}$ satisfy the equation (\ref{eqexponentescharact}) are called distinguished exponents.}   exponents of $\xi$ if \begin{equation}
\label{eqexponentescharact}
\mathbb{K}((X)) \left( \xi \right) = \mathbb{K}((X )) \left( X^{\lambda^{(1)}},\ldots,  X^{\lambda^{(g)}} \right).
\end{equation}

\end{defn}
In \cite{tornero2008kummer}, Tornero proves that all Puiseux series $\xi$, with minimal polynomial $f$ such that char $(\mathbb{K}) $ does not divide $\text{ deg }(f)$, has a set of distinguished exponents.

Given a vector with rationally independent coordinates $\omega \in\big(\mathbb{R}_{\geq 0}\big)^n$, let us consider the total order $\leq_{\omega}$ in $\big(\mathbb{Q}_{\geq0}\big)^n$ defined by 
$$\lambda\leq_{\omega}\beta \hspace{0.2in} \text{ if and only if }\hspace{0.2in} \lambda\cdot\omega\leq \beta\cdot\omega.$$
A set $\left\{ \lambda^{(1)},\ldots,\lambda^{(g)} \right\}$ of distinguished exponents of a Puiseux power series $\xi$ can be constructed as follows: 

\begin{equation} \label{eq22}
\begin{split}
\lambda^{(1)} & = \min_{\leq_{\omega}}\left\{\text{Supp}(\xi)\setminus \mathbb{Z}^n\right\} \\
 \lambda^{(2)}& = \min_{\leq_{\omega}}\{\text{Supp}(\xi)\setminus (\mathbb{Z}^n+\lambda^{(1)}\mathbb{Z})\} \\
\vdots \\
\lambda^{(i)}& = \min_{\leq_{\omega}}\left\{\text{Supp}(\xi)\setminus (\mathbb{Z}^n+\sum_{j=1}^{i-1}\lambda^{(j)}\mathbb{Z})\right\}
\end{split}
\end{equation}
for $i=2,\ldots,g$ with $g=\min\{i\ |\ \lambda^{(i)}\neq \infty\}.$
For more details we refer to \cite{tornero2008kummer}. 

In the rest of this paper we will consider Puiseux hypersurfaces $f$ such that char $(\mathbb{K}) \nmid  \text{ deg }(f)$. 


\section{Main Results}
\label{Mainsection}\label{sec7}

\begin{lem}
\label{lemaaaaa}
Let $\lambda^{(1)},\ldots,\lambda^{(g)}\in \big(\frac{1}{k}\mathbb{Z}_{\geq0}\big)^n$ be a set of distinguished exponents of a root $\xi$ of a Puiseux polynomial. There exists a $\mathbb{K}$-algebra isomorphism between the normalization of $\mathbb{K}[[X]][\xi]$ and $\mathbb{K}[[\bar{S}]]$, where $\bar{S}$ is the saturation of the affine semigroup $S$ generated by $\big(\mathbb{Z}_{\geq0}\big)^n$ and $\lambda^{(1)},\ldots,\lambda^{(g)}$.
\end{lem}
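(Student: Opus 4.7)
The plan is to identify the normalization of $\mathbb{K}[[X]][\xi]$ with that of $\mathbb{K}[[X]][X^{\lambda^{(1)}},\ldots,X^{\lambda^{(g)}}]$, and then invoke Lemma \ref{lema} followed by Remark \ref{lemasaturado} to rewrite the latter as $\mathbb{K}[[\bar S]]$. The bridge between the two rings is the defining property (\ref{eqexponentescharact}) of distinguished exponents, which forces them to generate the same extension of $\mathbb{K}((X))$.

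First I would note that $\xi$ admits a minimal polynomial over $\mathbb{K}[[X]]$ (take an irreducible factor in $\mathbb{K}[[X]][y]$ of the Puiseux polynomial of which $\xi$ is a root), so by Lemma \ref{isomK[[x]]/fyK[[x]][xi]} the ring $\mathbb{K}[[X]][\xi]$ is a domain. Fixing an integer $k$ large enough that $k\lambda^{(i)}\in\mathbb{Z}_{\geq 0}^n$ for every $i$ and that $\xi\in\mathbb{K}[[x_1^{1/k},\ldots,x_n^{1/k}]]$, both rings become subrings of this regular (hence normal) domain. In particular their fields of fractions both coincide with the single field $L:=\mathbb{K}((X))(\xi)=\mathbb{K}((X))(X^{\lambda^{(1)}},\ldots,X^{\lambda^{(g)}})$, where the second equality is exactly (\ref{eqexponentescharact}).

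The key step is then that each $X^{\lambda^{(i)}}$ satisfies $z^k-X^{k\lambda^{(i)}}=0$ and $\xi$ satisfies its minimal polynomial, so both $\mathbb{K}[[X]][\xi]$ and $\mathbb{K}[[X]][X^{\lambda^{(1)}},\ldots,X^{\lambda^{(g)}}]$ are integral over $\mathbb{K}[[X]]$. By transitivity of integrality, the integral closure of either ring in $L$ equals the integral closure of $\mathbb{K}[[X]]$ in $L$, so the two normalizations coincide as subrings of $L$. Applying Lemma \ref{lema} to rewrite $\mathbb{K}[[X]][X^{\lambda^{(1)}},\ldots,X^{\lambda^{(g)}}]$ as $\mathbb{K}[[S]]$ and Remark \ref{lemasaturado} to identify $\mathcal{N}(\mathbb{K}[[S]])$ with $\mathbb{K}[[\bar S]]$ then yields the claimed $\mathbb{K}$-algebra isomorphism.

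I expect the main care to be in fixing a common Puiseux power series ring containing both $\xi$ and the $X^{\lambda^{(i)}}$, so that (\ref{eqexponentescharact}) can be promoted from an equality of abstract fields to an identification of subrings of one ambient domain; once this is arranged, the rest is just transitivity of integral closure together with the previously established identifications.
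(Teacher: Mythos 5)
Your proposal is correct and follows essentially the same route as the paper: both arguments rest on the fraction fields coinciding via (\ref{eqexponentescharact}), on $X^{\lambda^{(i)}}$ being integral via $y^k-X^{k\lambda^{(i)}}$ and $\xi$ via its minimal polynomial, and then on Lemma \ref{lema} together with Remark \ref{lemasaturado}. The only (harmless) difference is in packaging: the paper establishes the two inclusions $\mathbb{K}[[\bar S]]\subseteq\mathcal{N}(\mathbb{K}[[X]][\xi])$ and $\xi\in\mathbb{K}[[\bar S]]$ separately, whereas you observe that both rings are integral over $\mathbb{K}[[X]]$ with the same fraction field $L$, so both normalizations equal the integral closure of $\mathbb{K}[[X]]$ in $L$.
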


\begin{proof}
Lemma \ref{lema} states that $\mathbb{K}[[X]][X^{\lambda^{(1)}},\ldots,X^{\lambda^{(g)}}]=\mathbb{K}[[S]]$, consequently
\[
\mathcal{N}(\mathbb{K}[[X]][X^{\lambda^{(1)}},\ldots,X^{\lambda^{(g)}}]) 
= \mathcal{N}(\mathbb{K}[[S]]) = \mathbb{K}[[\bar{S}]],
\]
where the second equality follows from Remark \ref{lemasaturado}.

For each distinguished exponent $\lambda^{(i)}$, the monomial $X^{\lambda^{(i)}}$ is the root of the monic polynomial
\[
y^k - X^{k\lambda^{(i)}} \in \mathbb{K}[[X]][y] \subseteq \mathbb{K}[[X]][\xi][y].
\]
By definition of distinguished exponents (see equation \eqref{eqexponentescharact}), $\mathbb{K}[[X]][\xi]$ and $\mathbb{K}[[S]]$ have the same field of fractions. Thus $X^{\lambda^{(i)}} \in \text{Frac }(\mathbb{K}[[X]][\xi])$, which implies $X^{\lambda^{(i)}} \in \mathcal{N}(\mathbb{K}[[X]][\xi])$; consequently, $\mathbb{K}[[\bar{S}]] \subseteq \mathcal{N}(\mathbb{K}[[X]][\xi])$.

On the other hand, $\xi \in \text{Frac }(\mathbb{K}[[S]])$ is a root of a monic polynomial with coefficients in $\mathbb{K}[X] \subseteq \mathbb{K}[[S]]$, hence $\xi \in \mathbb{K}[[\bar{S}]]$, completing the proof.
\end{proof}

We say that a cone is \emph{simplicial} when it has a linearly independent system of generators. A singular point of a normal irreducible algebroid hypersurface in $\mathbb{A}_{\mathbb{K}}^{n+1}$ is called a \emph{Hirzebruch-Jung singularity} when it is analytically isomorphic to the special point of an affine toric variety defined by a pair $(M,\sigma)$, where $M \supseteq \mathbb{Z}^n$ is a lattice in $\mathbb{Q}^n$ and $\sigma \subset \mathbb{R}^n$ is a simplicial cone of maximal dimension.

\begin{thm}
\label{mainthm}
The normalization of an irreducible Puiseux hypersurface is either non-singular or has a Hirzebruch-Jung singularity. Conversely, every Hirzebruch-Jung singularity is analytically isomorphic to the normalization of a Puiseux hypersurface.
\end{thm}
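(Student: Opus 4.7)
For the forward direction, let $f \in \mathbb{K}[[X]][y]$ be an irreducible Puiseux polynomial with root $\xi$. The plan is to chain the lemmas of Section~\ref{sectionpuiseux}: Lemma~\ref{isomK[[x]]/fyK[[x]][xi]} identifies the local ring with $\mathbb{K}[[X]][\xi]$; Tornero's theorem produces distinguished exponents $\lambda^{(1)}, \ldots, \lambda^{(g)} \in (\tfrac{1}{k}\mathbb{Z}_{\geq 0})^n$; and Lemma~\ref{lemaaaaa} identifies the normalization with $\mathbb{K}[[\bar{S}]]$, where $S$ is the semigroup generated by $(\mathbb{Z}_{\geq 0})^n$ together with the $\lambda^{(i)}$. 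I then observe that the cone $\sigma$ generated by $S$ equals $(\mathbb{R}_{\geq 0})^n$ (already generated by $(\mathbb{Z}_{\geq 0})^n$) and is simplicial of maximal dimension, that the lattice $M := \mathbb{Z}S$ satisfies $\mathbb{Z}^n \subseteq M \subseteq \mathbb{Q}^n$, and that $\bar{S} = M \cap \sigma$ by Lemma~\ref{lemaquenosfalta}. Hence $\mathbb{K}[[\bar{S}]]$ is the completion at the special point of the toric variety attached to $(M,\sigma)$, which is by definition a Hirzebruch–Jung singularity unless $M = \mathbb{Z}^n$, in which case $\mathbb{K}[[\bar{S}]] = \mathbb{K}[[X]]$ is nonsingular.

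For the converse, given a Hirzebruch–Jung singularity defined by $(M, \sigma)$, the $\mathbb{Q}$-linear isomorphism $T: \mathbb{Q}^n \to \mathbb{Q}^n$ sending the primitive generators of $\sigma$ (which lie in $M$) to the standard basis reduces the situation to $\sigma = (\mathbb{R}_{\geq 0})^n$ and a lattice $M' := T(M) \supseteq \mathbb{Z}^n$ in $\mathbb{Q}^n$, with an induced $\mathbb{K}$-algebra isomorphism $\mathbb{K}[[M \cap \sigma]] \simeq \mathbb{K}[[M' \cap (\mathbb{R}_{\geq 0})^n]]$. Since $M'/\mathbb{Z}^n$ is finite, choose generators $\lambda^{(1)}, \ldots, \lambda^{(g)} \in M' \cap [0,1)^n$ of this quotient, and set
\[
\xi = \sum_{i=1}^{g} c_i\, X^{\lambda^{(i)}}
\]
for generic $c_i \in \mathbb{K}^*$; let $f$ be the minimal polynomial of $\xi$ over $\mathbb{K}((X))$. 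The hypothesis $\operatorname{char}(\mathbb{K}) \nmid [M':\mathbb{Z}^n]$ makes the Kummer extension $\mathbb{K}((X))(X^{\lambda^{(1)}}, \ldots, X^{\lambda^{(g)}})/\mathbb{K}((X))$ separable, so the primitive element theorem yields $\mathbb{K}((X))(\xi) = \mathbb{K}((X))(X^{\lambda^{(1)}}, \ldots, X^{\lambda^{(g)}})$ for generic coefficients, identifying the $\lambda^{(i)}$ as a set of distinguished exponents of $\xi$. Lemma~\ref{lemaaaaa} then gives that the normalization of $\mathbb{K}[[X]][\xi]$ equals $\mathbb{K}[[M' \cap (\mathbb{R}_{\geq 0})^n]]$, matching the given Hirzebruch–Jung ring via $T$.

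The main obstacle I anticipate is showing that the minimal polynomial $f$ is genuinely a Puiseux polynomial — that is, that its coefficients lie in $\mathbb{K}[[X]]$ and not merely in $\mathbb{K}((X))$. The roots of $f$ are the Galois conjugates of $\xi$ under the group $(\mu_k)^n \simeq \operatorname{Gal}(\mathbb{K}((X^{1/k}))/\mathbb{K}((X)))$, each of the form $\sum_i c_i \chi(\lambda^{(i)})\, X^{\lambda^{(i)}} \in \mathbb{K}[[X^*]]$. The coefficients of $f$, being elementary symmetric functions of these conjugates, are Galois-invariant elements of $\mathbb{K}[[X^{1/k}]]$; a monomial-by-monomial inspection shows that such invariants have support only in $\mathbb{Z}^n$, hence belong to $\mathbb{K}[[X]]$, which is what is needed to close the argument.
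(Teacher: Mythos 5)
Your argument is correct and follows essentially the same route as the paper: the forward direction is the identical chain through Lemma~\ref{isomK[[x]]/fyK[[x]][xi]}, Tornero's distinguished exponents, Lemma~\ref{lemaaaaa}, and Lemma~\ref{lemaquenosfalta}, and the converse has the same skeleton (normalize the simplicial cone to the positive orthant, pick exponents generating the lattice over $\mathbb{Z}^n$, form $\xi$ as a sum of the corresponding monomials, and apply Lemma~\ref{lemaaaaa} to its minimal polynomial). The one genuine divergence is how you certify that your chosen $\lambda^{(i)}$ are distinguished exponents of $\xi$: you take generic coefficients $c_i$ and invoke separability plus the primitive element theorem, whereas the paper takes all coefficients equal to $1$ and instead chooses the $\lambda^{(i)}$ by the greedy $\leq_\omega$-minimal construction of \eqref{eq22}, so that Tornero's theorem applies directly; your version is slightly less self-contained (it needs the genericity argument) but avoids re-running the $\omega$-ordering machinery. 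Two details you handle that the paper leaves implicit and which are worth keeping: the Galois-invariance check that the minimal polynomial of $\xi$ really has coefficients in $\mathbb{K}[[X]]$ (so that $f$ is a Puiseux polynomial at all), and the explicit observation that the relevant characteristic hypothesis must hold for the degree of the constructed $f$ --- a condition that, in positive characteristic, is a restriction on which Hirzebruch--Jung singularities the converse actually covers.
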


\begin{proof}
Let $f\in\mathbb{K}[[X]][y]$ be an irreducible monic Puiseux polynomial and $\xi$ a root of $f$. Lemma \ref{isomK[[x]]/fyK[[x]][xi]} gives
\[
\mathbb{K}[[X]][y]/ \langle f \rangle \simeq \mathbb{K}[[X]][\xi].
\]
Choose a set $E$ of distinguished exponents of $\xi$ as in \eqref{eq22} and let $S$ be the affine semigroup generated by $E$ and $(\mathbb{Z}_{\ge 0})^n$. Lemma \ref{lemaquenosfalta} states that the saturation $\bar{S}$ is the intersection of $\mathbb{Z}S$ and the simplicial cone $(\mathbb{R}_{\ge 0})^n$. By Proposition \ref{specdesemigrupo}, $\mathbb{K}[\bar{S}]$ is the coordinate ring of an affine toric variety, which is normal by Lemma \ref{lemasaturado2} and its completion is $\mathbb{K}[[\bar{S}]]$ by Remark \ref{lemasaturado}. Lemma \ref{lemaaaaa} implies that $\mathcal{N}(\mathbb{K}[[X]][\xi]) = \mathbb{K}[[\bar{S}]]$. Hence, $\mathcal{N}(\mathbb{K}[[X]][y]/\langle f \rangle)$ is either non-singular or has a Hirzebruch-Jung singularity.

Conversely, let a Hirzebruch-Jung singularity be of the form $\operatorname{Spec}(\mathbb{K}[S])$, where $S = L \cap (\mathbb{R}_{\ge 0})^n$ is an affine semigroup and $L$ a subgroup of $\mathbb{Z}^n$. Let $m_i \in \mathbb{Z}_{\ge 0}$ be defined by
\[
m_i e_i = \min \{ L \cap \langle e_i \rangle \setminus \{0\} \},
\]
where $e_i$ is the $i$-th canonical basis vector. Define
\[
L' = \left\{ \left( \frac{\alpha_1}{m_1}, \ldots, \frac{\alpha_n}{m_n} \right) \ \middle| \ \alpha \in L \right\}, \quad S' = L' \cap (\mathbb{R}_{\ge 0})^n.
\]
Since Spec$(\mathbb{K}[S])$ is singular, $L' \neq \mathbb{Z}^n$. We have
\begin{equation}
\label{hola}
\mathbb{K}[S'] \simeq \mathbb{K}[S].
\end{equation}

Choose $\omega \in (\mathbb{R}_{\ge 0})^n$ with rationally independent coordinates and construct
\[
\lambda^{(1)} = \min_{\le_\omega} \{ S' \setminus \mathbb{Z}^n \}, \quad
\lambda^{(i)} = \min_{\le_\omega} \left\{ S' \setminus \mathbb{Z}^n + \sum_{j=1}^{i-1} \mathbb{Z} \lambda^{(j)} \right\}, \quad i=2,...,g,
\]
with $g = \min \{ i \mid \lambda^{(i)} \neq \infty \}$. Then $S'$ is the saturation of the affine semigroup generated by $E = \{\lambda^{(1)},\ldots,\lambda^{(g)}\}$ and $(\mathbb{Z}_{\ge 0})^n$. Let $\xi = \sum_{i=1}^g X^{\lambda^{(i)}}$ and $f$ its minimal polynomial. Then
\[
\mathcal{N}(\mathbb{K}[[X]][y]/\langle f \rangle) \simeq \mathcal{N}(\mathbb{K}[[X]][\xi]) \simeq \mathbb{K}[[S']] \simeq \mathbb{K}[[S]],
\]
where the first isomorphism is given by Lemma~\ref{isomK[[x]]/fyK[[x]][xi]}, the second isomorphism follows from Lema ~\ref{lemaaaaa}, and the third one is  by (\ref{hola}). That proves that the given Hirzebruch-Jung singularity is analytically isomorphic to the Puiseux hypersurface defined by $f$.
\end{proof}

\begin{exa}
\label{examLipman}
Lipman proved in the complex case that every normal quasi-ordinary singularity is isomorphic to a germ of the form $y^k - x_1 \cdots x_n$ for $k \ge 2$, see \cite[Remark 7.3.2]{lipman1988topological}. In other words, every normal quasi-ordinary singularity is analytically isomorphic to the affine toric variety $\mathrm{Spec}(\mathbb{C}[S])$, where $S$ is the affine semigroup generated by $(1/k, \ldots, 1/k)$ and $(\mathbb{Z}_{\ge 0})^n$.
\end{exa}

\begin{exa}
Let us consider the quasi-ordinary singularity
\[
f(x_1, x_2, x_3, y) = y_4 - 2(x_1^3 x_2^2 + x_1^4 x_2^3 x_3^2)y^2 + x_1^8 x_2^6 x_3^4 - 2x_1^7 x_2^5 x_3^2 + x_1^6 x_2^4.
\]
Its characteristic exponents are $(3/2,1,0)$ and $(2,3/2,1)$, see \cite{Ayala2025}. The saturation of the affine semigroup $S$ generated by these exponents and $(\mathbb{Z}_{\ge 0})^3$ is $\frac{1}{2}\mathbb{Z}_{\ge 0} \times \frac{1}{2}\mathbb{Z}_{\ge 0} \times \mathbb{Z}_{\ge 0} \simeq (\mathbb{Z}_{\ge 0})^3$. Therefore
\[
\mathcal{N}\left( \frac{\mathbb{K}[x_1,x_2,x_3][y]}{\langle f \rangle} \right) \simeq \mathcal{N}(\mathbb{K}[S]) \simeq \mathbb{K}[x_1,x_2,x_3].
\]
\end{exa}

\begin{exa}
The roots of $f=(z^2-x-y)^2-4xy \in \mathbb{K}[[x,y]][z]$ are the Puiseux series $\pm x^{1/2} \pm y^{1/2}$, which are not quasi-ordinary, see \cite{Ayala2025}. Following Tornero's algorithm, we obtain distinguished exponents $(1/2,0)$ and $(0,1/2)$. The affine semigroup $S$ generated by $(\mathbb{Z}_{\ge 0})^2$ and these exponents is already saturated. The normalization of the hypersurface defined by $f$ is analytically isomorphic to the non-singular toric surface $\mathbb{K}[S] \simeq \mathbb{K}^2$.
\end{exa}

In the complex-analytic case, Popescu-Pampu \cite[Proposition 3.5]{popescu2003higher} proved that Hirzebruch-Jung singularities are precisely the normal quasi-ordinary singularities. Hence, we have:

\begin{cor}
\label{corolariofinal}
The germs of normal quasi-ordinary singularities are precisely the germs of the normalizations of complex analytic irreducible Puiseux hypersurfaces.
\end{cor}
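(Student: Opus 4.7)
The plan is to derive the corollary as an immediate consequence of Theorem \ref{mainthm} combined with Popescu-Pampu's identification of Hirzebruch--Jung singularities with normal quasi-ordinary singularities \cite[Proposition 3.5]{popescu2003higher}. Both directions reduce to chasing this identification through the two conclusions of Theorem \ref{mainthm}, so the argument is essentially a matter of assembling ingredients already established in the paper, once one handles a small edge case.

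For the first direction, I would start with the germ of the normalization $\mathcal{N}(\mathbb{K}[[X]][y]/\langle f\rangle)$ of an irreducible Puiseux hypersurface over $\mathbb{C}$. By Theorem \ref{mainthm}, this germ is either non-singular or is a Hirzebruch--Jung singularity. In the latter case, Popescu-Pampu's proposition directly identifies it as a normal quasi-ordinary singularity, so the germ is of the desired form. In the non-singular case, one notes that a smooth germ of dimension $n$ is analytically isomorphic to $(\mathbb{C}^n,0)$, which is trivially its own normalization and trivially quasi-ordinary (the identity projection to $\mathbb{C}^n$ is unramified everywhere). Thus in both cases the germ arises as a normal quasi-ordinary singularity.

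For the converse, I would start with the germ of an arbitrary normal quasi-ordinary singularity. By Popescu-Pampu's result, such a germ is analytically isomorphic to a Hirzebruch--Jung singularity (allowing the trivial case where the cone is regular, giving a smooth germ). The converse part of Theorem \ref{mainthm} then produces an irreducible Puiseux polynomial $f$ whose hypersurface has normalization analytically isomorphic to the given Hirzebruch--Jung germ. Composing the two analytic isomorphisms yields the required identification.

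The only subtle point will be making sure the smooth case is not excluded by the formulations: Theorem \ref{mainthm} explicitly allows the normalization to be non-singular, and on the quasi-ordinary side the smooth germ $(\mathbb{C}^n,0)$ must be counted among normal quasi-ordinary singularities. Once this is clarified, no further computation is needed, and the corollary follows in a few lines by concatenating the two analytic isomorphisms. I expect no serious obstacle beyond this bookkeeping, since all the substantive geometric content sits in Theorem \ref{mainthm} and in \cite[Proposition 3.5]{popescu2003higher}.
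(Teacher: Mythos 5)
Your proposal is correct and follows exactly the paper's route: the corollary is obtained by combining Theorem \ref{mainthm} with Popescu-Pampu's identification of Hirzebruch--Jung singularities with normal quasi-ordinary singularities, the paper giving no further argument. Your extra care with the non-singular case is reasonable bookkeeping but does not change the approach.
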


\begin{exa}
Let $\xi = x + x^{2/3} y^{1/3} + y$ and $f$ be its minimal polynomial. The only distinguished exponent of $\xi$ is $(2/3,1/3)$. The saturation of the affine semigroup $S$ generated by $(2/3,1/3)$ and $(\mathbb{Z}_{\ge 0})^2$ is generated by $\{(1,0),(1/3,2/3),(2/3,1/3),(0,1)\}$. By Theorem \ref{mainthm},
\[
\mathcal{N}(\mathbb{C}[[x]][y]/\langle f \rangle)
\]
is analytically isomorphic to the toric variety $\mathrm{Spec}(\mathbb{C}[\bar{S}])$, whose canonical embedding in $\mathbb{C}^4$ is the Veronese variety
\[
\mathrm{Spec}(\mathbb{C}[\bar{S}]) = V(xw - yz, y^2 - xz, z^2 - yw).
\]
The projection $\pi: (x,y,z,w) \mapsto (x,w)$ is a quasi-ordinary projection. Therefore, starting from a Puiseux hypersurface, we obtain its normalization, which is a non-hypersurface quasi-ordinary singularity.
\end{exa}

\printbibliography[title={References}]




\section*{Declarations}

\textbf{Competing Interests:} The authors have no relevant financial or non-financial interests to disclose.

\section*{Affiliations}

\end{document}